\title{The index of a threefold canonical singularity}
\author{Masayuki Kawakita}
\address{Research Institute for Mathematical Sciences, Kyoto University, Kyoto 606-8502, Japan}
\email{masayuki@kurims.kyoto-u.ac.jp}
\theoremstyle{plain}
\newtheorem{theorem}{Theorem}[section]
\newtheorem{proposition}[theorem]{Proposition}
\newtheorem{lemma}[theorem]{Lemma}
\newtheorem{corollary}[theorem]{Corollary}
\newtheorem{question}[theorem]{Question}
\newtheorem*{question'}{Question \ref{qsn:Shokurov}$\mathbf{'}$}
\theoremstyle{definition}
\newtheorem{definition}[theorem]{Definition}
\newtheorem{example}[theorem]{Example}
\theoremstyle{remark}
\newtheorem{remark}[theorem]{Remark}
\newtheorem*{acknowledgements}{Acknowledgements}
\newcommand{\bA}{\mathbb{A}}
\newcommand{\bN}{\mathbb{N}}
\newcommand{\bQ}{\mathbb{Q}}
\newcommand{\bZ}{\mathbb{Z}}
\newcommand{\cK}{\mathcal{K}}
\newcommand{\cO}{\mathcal{O}}
\newcommand{\fm}{\mathfrak{m}}
\DeclareMathOperator{\md}{md}
\DeclareMathOperator{\Supp}{Supp}
\DeclareMathOperator{\wt}{wt}
\begin{document}
\begin{abstract}
The index of a $3$-fold canonical singularity at a crepant centre is at most $6$.
\end{abstract}

\maketitle

\section{Introduction}
Let $P\in X$ be a normal $\bQ$-Gorenstein singularity. Shokurov asked if one can bound the index $r_P$ of $X$ at $P$ in terms of the discrepancies of divisors over $X$.

Suppose that $X$ has log canonical singularities with $P$ a log canonical centre. In $\dim X=2$, $r_P$ is $1$, $2$, $3$, $4$ or $6$ by the classification of singularities. In an arbitrary dimension, Ishii \cite{I00} and Fujino \cite{F01} reduced the boundedness of $r_P$ to a conjectural boundedness of a quotient of the birational automorphism group of a variety $S$ with $K_S\sim0$. In particular, they proved $r_P\le66$ in $\dim X=3$.

Suppose that $X$ has canonical singularities. In $\dim X=2$, $P$ is a rational double point, so $r_P=1$. The purpose of this paper is to provide an affirmative answer in $\dim X=3$. 

\begin{theorem}\label{thm:main}
Let $P\in X$ be a $3$-fold canonical singularity such that $P$ is a crepant centre. Then the index of $X$ at $P$ is at most $6$.
\end{theorem}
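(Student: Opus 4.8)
The plan is to convert the index into the order of a cyclic group action by passing to the index-one cover, to extract the crepant divisor over that cover, and then to read off the order from the surface produced by adjunction along that divisor.

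First I would reduce to the Gorenstein case via the canonical cover. Let $r=r_P$ and let $\pi\colon(\tilde P\in\tilde X)\to(P\in X)$ be the index-one (canonical) cover; it is a cyclic $\mu_r$-cover, \'etale in codimension one, with $\tilde X$ Gorenstein canonical and $K_{\tilde X}=\pi^*K_X$. Since $\pi$ is crepant and $P$ is a crepant centre, $\tilde P$ is again a crepant centre of $\tilde X$, and $\mu_r$ acts faithfully near $\tilde P$, freely in codimension one, scaling a local generator $\omega$ of $\omega_{\tilde X}$ by a primitive $r$-th root of unity. It thus suffices to bound the order by which such a $\mu_r$ can act on a Gorenstein canonical crepant-centre germ while being free in codimension one and acting effectively on $\omega$.

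Next I would extract the crepant divisor and pass to a surface. As $\tilde P$ is a crepant centre there is a prime divisor $F$ over $\tilde X$ with $a(F,\tilde X)=0$ and centre $\tilde P$; I would take a $\mu_r$-equivariant crepant extraction $g\colon\tilde Y\to\tilde X$ with $K_{\tilde Y}=g^*K_{\tilde X}$ and $F\subset\tilde Y$ a divisor (existence via a $\mu_r$-equivariant run of the MMP, or an equivariant feasible weighted blow-up). Because $K_{\tilde X}$ is Cartier and trivial near $\tilde P$ while $F$ is contracted to $\tilde P$, one gets $\cO_{\tilde Y}(K_{\tilde Y})|_F\cong\cO_F$, so adjunction yields a log canonical surface pair $(F,\operatorname{Diff}_F(0))$ with $K_F+\operatorname{Diff}_F(0)\equiv F|_F$ and $-(K_F+\operatorname{Diff}_F(0))$ nef, i.e.\ a weak log del Pezzo (degenerating to the $K\equiv0$ case when $F|_F\equiv0$). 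The group $\mu_r$ acts on $(F,\operatorname{Diff}_F(0))$, and, via the Poincar\'e residue of $\omega/s$ along $F$, on the log canonical form of this pair; the primitivity of the character on $\omega$ forces $\mu_r$ to act effectively on that log canonical datum. Finally I would bound $r$ by the surface side: the coefficients of $\operatorname{Diff}_F(0)$ are of standard form $1-1/m$, and a cyclic group acting effectively on the log canonical form of such a surface must have order in $\{1,2,3,4,6\}$, the same list as the two-dimensional log canonical indices recalled in the introduction and governed by the automorphism orders of elliptic curves and of du Val configurations. This gives $r\le6$.

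The main obstacle is the middle step. The crepant place need not be unique, so $\mu_r$ may a priori permute candidate divisors, and the extraction must be carried out equivariantly with $F$ itself $\mu_r$-invariant; moreover the passage through adjunction must track the character of $\mu_r$ on the conormal $N_F^\vee$ so that the residue form on $(F,\operatorname{Diff}_F(0))$ still carries a primitive $r$-th root of unity. Equivalently, the crux is to show that the crepant-centre hypothesis genuinely produces a $\mu_r$-invariant surface with trivial log canonical class on which $\mu_r$ acts faithfully through the canonical form, while keeping $\operatorname{Diff}_F(0)$ of standard coefficients; once this is established, the classical cyclic-automorphism bound closes the argument at $6$, sharpening the general reduction to a $K_S\sim0$ variety used by Ishii and Fujino for the weaker bound $66$.
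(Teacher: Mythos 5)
The decisive gap is your very first reduction: the assertion that, since $\pi$ is crepant and $P$ is a crepant centre, $\tilde P$ is again a crepant centre of the index-one cover $\tilde X$. This does not follow. For a finite cover \'etale in codimension one, discrepancies transform by $a_{\tilde E}(\tilde X)+1=e\,(a_E(X)+1)$, where $e$ is the ramification index along $E$ of the induced cover of models; a crepant divisor $E$ over $X$ with centre $P$ therefore lifts to a divisor of discrepancy $e-1$, and $e>1$ is the typical case, since the cover is obtained by extracting the $r_P$-th root of a local generator of $\cO_X(r_PK_X)$, whose multiplicity along $E$ need not be divisible by $r_P$ precisely because $K_X$ is not Cartier at $P$. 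What survives is only that $\tilde X$ is Gorenstein canonical and non-terminal at $\tilde P$, i.e.\ some crepant divisor over $\tilde X$ has centre \emph{through} $\tilde P$, possibly a curve --- and that weaker property bounds nothing. Indeed, the cover of Example \ref{exl:unbounded}, the germ $o\in(x_1x_2+x_3^2=0)\subset\bA^4$ with $\mu_r$ acting by weights $(1,-1,0,1)$, is exactly a Gorenstein canonical germ on which $\mu_r$ acts freely in codimension one and by a primitive character on $\omega$, with $o$ lying on a one-dimensional crepant centre, for \emph{every} $r$. So your reduction is sound only if the crepant-centre property \emph{at the point} transfers to the cover; this is an unproven claim that carries essentially the full weight of the theorem, and nothing in the proposal addresses it.

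The surface step also fails as stated. When $F|_F\not\equiv0$ the pair $(F,\mathrm{Diff}_F(0))$ is a weak log del Pezzo and carries no log canonical form for $\mu_r$ to act on; cyclic automorphism groups of log del Pezzo surfaces have unbounded order (already $\mathbb{P}^2$ admits $\mu_r\subset\mathrm{PGL}_3$ for every $r$), so the $\{1,2,3,4,6\}$ bound, which is an eigenvalue statement for pairs with numerically trivial log canonical class, gives nothing in that case. Even when $K_F+\mathrm{Diff}_F(0)\equiv0$, you must prove that the character of $\mu_r$ on the Poincar\'e residue remains primitive: the character on the conormal of $F$ can absorb the primitivity on $\omega$, and you flag this yourself without supplying a mechanism. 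For contrast, the paper avoids covers and group actions altogether: it builds a tower of crepant blow-ups to a $\bQ$-factorial terminal $Y$ carrying a divisor $F$ whose pushforwards compute the index function $\delta_P(i)$ (Theorem \ref{thm:divisor}), and then Reid's singular Riemann--Roch converts $\delta_P$ into the basket identity \eqref{eqn:B}, whose integral solutions are classified outright and force $r_P\le6$. Your route, a crepant-centre analogue of the Ishii--Fujino reduction, would be attractive if the two gaps above could be closed, but as it stands both the transfer to the cover and the automorphism bound are missing, and the first is contradicted in spirit by Example \ref{exl:unbounded}.
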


Here a crepant centre means the centre of a divisor with discrepancy zero. The condition that $P$ is a crepant centre is necessary even for a strictly canonical singularity, see Example \ref{exl:unbounded}. On the other hand, if once the minimal discrepancy at $P$ is fixed, then one can bound $r_P$ for an arbitrary $3$-fold canonical singularity $P\in X$ (Theorem \ref{thm:md}).

We shall prove Theorem \ref{thm:main} by using the singular Riemann--Roch formula (singRR) \cite{R87} due to Reid. In Sect.\ \ref{sec:crepant}, we build a tower $Y\to X$ of crepant blow-ups with $\bQ$-factorial terminal $Y$, on which the singRR is applicable unconditionally. Then we construct a divisor $F$ on $Y$ which possesses the information on the index $r_P$. The $r_P$ is determined by the Euler characteristics $\chi(iK_Y|_F)$, which can be explicitly computed by the singRR (Sect.\ \ref{sec:singRR}). We derive a numerical classification of the singularities on $Y$ together with $r_P$ in Sect.\ \ref{sec:boundedness}, by the method \cite{K01}, \cite{K05} in the classification of $3$-fold divisorial contractions. The boundedness of indices in terms of minimal discrepancies is discussed in Sect.\ \ref{sec:md}.

We work over an algebraically closed field $k$ of characteristic zero. A germ $P\in X$ means an algebraic germ of a variety $X$ at a closed point $P$.

\section{Crepant blow-ups}\label{sec:crepant}
Let $X$ be a normal $\bQ$-Gorenstein variety.

\begin{definition}
The \textit{index} of $X$ at a point $P$ is the smallest positive integer $r$ such that $rK_X$ is a Cartier divisor at $P$.
\end{definition}

Consider a normal variety $Y$ with a proper birational morphism $f\colon Y\to X$. A prime divisor $E$ on any such $Y$ is called a divisor \textit{over} $X$, and the image $f(E)$ is called the \textit{centre} of $E$ on $X$ and denoted by $c_X(E)$. The valuation $v_E$ on the function field of $X$ given by such $E$ is called an \textit{algebraic valuation} of $X$. If we write
\begin{align*}
K_Y=f^*K_X+\sum_Ea_E(X)E\quad\textrm{with $a_E(X)\in\bQ$},
\end{align*}
then $a_E(X)$ is called the \textit{discrepancy} of $E$. We say that $X$ has \textit{log canonical}, \textit{log terminal}, \textit{canonical}, \textit{terminal} singularities if $a_E(X)\ge-1$, $>-1$, $\ge0$, $>0$ respectively for all exceptional divisors $E$ over $X$.

The notion of crepancy is crucial in this paper.

\begin{definition}
\begin{enumerate}
\item
A \textit{crepant divisor} over $X$ is an exceptional divisor $E$ over $X$ with $a_E(X)=0$. A \textit{crepant valuation} of $X$ is the algebraic valuation $v_E$ given by a crepant divisor $E$.
\item
A \textit{crepant centre} on $X$ is the centre $c_X(E)$ of a crepant divisor $E$.
\item
A \textit{crepant blow-up} $f\colon Y\to X$ is a projective birational morphism from a normal variety $Y$ such that $K_Y=f^*K_X$.
\end{enumerate}
\end{definition}

\begin{remark}
\begin{enumerate}
\item
Suppose that $X$ is canonical. Then the number of crepant valuations of $X$ is finite. The complement of the union of all crepant centres is the largest terminal open subvariety of $X$.
\item
If $Y\to X$ is a crepant blow-up, then $X$ is canonical if and only if so is $Y$.
\end{enumerate}
\end{remark}

We have a crepant blow-up by the LMMP.

\begin{proposition}
Let $X$ be a variety with canonical singularities and $v$ a crepant valuation of $X$. Then there exists a crepant blow-up $f\colon Y\to X$ such that
\begin{enumerate}
\item
$Y$ is $\bQ$-factorial,
\item\label{itm:crepant_one}
$f$ has exactly one exceptional divisor $E$, and $v_E=v$,
\item
$-E$ is $f$-nef.
\end{enumerate}
\end{proposition}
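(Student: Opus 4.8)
The plan is to realise $f$ as the output of a relative minimal model program over $X$. First I would take a log resolution $g\colon W\to X$ on which the crepant divisor $E_0$ defining $v$ appears as a divisor and the union of the $g$-exceptional divisors $E_0,\dots,E_n$ is simple normal crossing. Writing $K_W=g^*K_X+\sum_{i=0}^na_iE_i$, the canonicity of $X$ gives $a_i\ge0$ for all $i$, while $E_0$ being crepant gives $a_0=0$. The goal is then to contract $E_1,\dots,E_n$, to keep only $E_0$, and to end on a $\bQ$-factorial variety $Y$ with $K_Y=f^*K_X$ and $-E_0$ being $f$-nef.

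For the contraction I would fix rationals $0<\epsilon_i\ll1$ and run the $(K_W+\Delta)$-minimal model program over $X$ for $\Delta:=\sum_{i=1}^n\epsilon_iE_i$. The pair $(W,\Delta)$ is klt, and since $g_*\Delta=0$ one has $K_W+\Delta\equiv_g D:=\sum_{i=1}^n(a_i+\epsilon_i)E_i$, an effective $g$-exceptional divisor whose support avoids $E_0$. By the termination results of the minimal model program for klt pairs (Birkar--Cascini--Hacon--McKernan), this program terminates at a $\bQ$-factorial relative minimal model $f\colon Y\to X$ on which $K_Y+\Delta_Y$ is $f$-nef. As $f^*K_X$ is $f$-trivial, the effective $f$-exceptional divisor $G:=K_Y+\Delta_Y-f^*K_X$ is $f$-nef, so applying the negativity lemma to $-G$ forces $G\le0$ and hence $G=0$. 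Therefore $\Delta_Y=0$, every $E_i$ with $i\ge1$ is contracted, and $K_Y=f^*K_X$; this already yields item (i) and the crepancy of $f$.

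The delicate point—and the main obstacle—is that $E_0$ has discrepancy exactly $0$. The negativity lemma above only shows that any surviving $f$-exceptional divisor has discrepancy $0$, so a priori it permits $E_0$ to be contracted along with the others and says nothing about the sign of $E_0$ on $f$-contracted curves. To force $E_0$ to survive I would break this crepant tie by a small perturbation: running the minimal model program for the klt pair $(X,\delta H)$ with a general effective $\bQ$-Cartier $H$ through $c_X(E_0)$ makes $a_{E_0}(X,\delta H)=-\delta\, v_{E_0}(H)<0$, so that $E_0$ becomes a divisor of negative discrepancy that must be extracted; here the finiteness of the set of crepant valuations of $X$ is what lets me choose $H$, $\delta$ and the $\epsilon_i$ so that the extracted set is exactly $\{E_0\}$, giving item (ii). Finally, for item (iii) I would pass to the relative model $\operatorname{Proj}_X\bigoplus_{d\ge0}f_*\cO_Y(-dE_0)$, i.e.\ run the $(-E_0)$-minimal model program over $X$; its termination makes $-E_0$ $f$-nef while a last application of the negativity lemma preserves $K_Y=f^*K_X$ and keeps $\Supp$ of the exceptional locus equal to $E_0$. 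The heart of the argument is thus the careful bookkeeping of discrepancies in the crepant case, for which I expect the perturbation and the negativity lemma to be the essential tools.
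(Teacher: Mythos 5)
Your proposal is correct and takes essentially the same route as the paper's proof: a log resolution, a relative klt MMP over $X$ via \cite{BCHM10}, the negativity lemma, and a perturbing divisor $H$ through the crepant centre which gives the chosen divisor strictly negative (log) discrepancy so that it must survive while all other exceptional divisors are contracted. The paper simply packages your three stages into a single run, using the boundary $\epsilon(H_Z+2(F-mE_Z))$ with $H$ Cartier containing all crepant centres, so that $K_Z+\Delta\equiv_X$ an exceptional divisor with negative coefficient exactly on $E_Z$ and positive coefficients elsewhere, whence the contraction of the other divisors, the survival of $E$, and the $f$-nefness of $-E$ (your separate third step, which is in fact unnecessary) all follow at once from the negativity lemma.
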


\begin{proof}
Take a projective resolution of singularities $g\colon Z\to X$, and denote by $E_Z$ the divisor on $Z$ with $v_{E_Z}=v$. Take a Cartier divisor $H>0$ on $X$ whose support contains all the crepant centres. We write $g^*H=H_Z+F$ with the strict transform $H_Z$ of $H$, and $m$ for the coefficient of $E_Z$ in $F$. Fix $\epsilon>0$ so that $(Z,\epsilon(H_Z+2(F-mE_Z))$ is klt, and run $(K_Z+\epsilon(H_Z+2(F-mE_Z))$-LMMP over $X$ by \cite{BCHM10} to get a log minimal model $f\colon Y\to X$.

By $K_Z+\epsilon(H_Z+2(F-mE_Z))\equiv_XK_Z+\epsilon(F-2mE_Z)$, the negativity lemma \cite[Lemma 2.19]{K+92} shows that this LMMP contracts exactly all the $g$-exceptional divisors but $E_Z$, and $-E$ is $f$-nef for the strict transform $E$ of $E_Z$. Hence $f$ is a required crepant blow-up.
\end{proof}

\begin{remark}
If $X$ is $\bQ$-factorial, then (\ref{itm:crepant_one}) implies that $\rho(Y/X)=1$ and $-E$ is $f$-ample.
\end{remark}

\begin{corollary}\label{cor:crepant}
Let $X=X_0$ be a variety with canonical singularities and $Z$ a crepant centre on $X$. Then there exists a sequence of crepant blow-ups $f_t\colon X_t\to X_{t-1}$ for $1\le t\le s$ such that
\begin{enumerate}
\item
$X_t$ is $\bQ$-factorial for $t\ge1$ and $X_s$ is terminal,
\item
for $t\ge1$, $f_t$ has exactly one exceptional divisor $E_t$ and $-E_t$ is $f_t$-nef,
\item
$f_1(E_1)=Z$.
\end{enumerate}
\end{corollary}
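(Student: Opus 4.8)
The plan is to build the tower one divisor at a time, invoking the preceding proposition at each stage, and to guarantee termination by tracking the number of crepant valuations, which is finite.

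First I would treat the base case. Since $Z$ is a crepant centre, I can choose a crepant divisor $E$ over $X_0=X$ with $c_X(E)=Z$ and set $v=v_E$. Applying the proposition to the pair $(X_0,v)$ yields a crepant blow-up $f_1\colon X_1\to X_0$ with $X_1$ $\bQ$-factorial, a single exceptional divisor $E_1$ satisfying $v_{E_1}=v$, and $-E_1$ nef over $X_0$; in particular $f_1(E_1)=c_X(E)=Z$, which is (iii).

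The inductive step proceeds as follows. Suppose $X_t$ is already constructed and $\bQ$-factorial. Since every blow-up in the tower is crepant and $X_0$ is canonical, the remark that a crepant blow-up of a canonical variety is again canonical shows that each $X_t$ is canonical. If $X_t$ is terminal I stop and put $s=t$. Otherwise $X_t$ admits a crepant valuation $w$, and the proposition applied to $(X_t,w)$ delivers $f_{t+1}\colon X_{t+1}\to X_t$ satisfying (i) and (ii).

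The crux is termination, and this is the step I expect to require the most care. The key is that discrepancies are unchanged under a crepant blow-up: for $f_{t+1}\colon X_{t+1}\to X_t$ we have $K_{X_{t+1}}=f_{t+1}^*K_{X_t}$, so $a_F(X_{t+1})=a_F(X_t)$ for every divisor $F$ over $X_{t+1}$. Moreover, any divisor exceptional over $X_{t+1}$ is also exceptional over $X_t$, because a divisor on $X_t$ has a strict transform which is a divisor on $X_{t+1}$. Combining these two facts, every crepant valuation of $X_{t+1}$ is a crepant valuation of $X_t$, hence ultimately of $X_0=X$. On the other hand, the valuation $w=v_{E_{t+1}}$ extracted at stage $t+1$ is a crepant valuation of $X_t$ but not of $X_{t+1}$, since $E_{t+1}$ is a prime divisor on $X_{t+1}$ and so no longer exceptional over it. Therefore the set of crepant valuations strictly shrinks at each stage. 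As $X$ has only finitely many crepant valuations, the process must stop after finitely many steps, and at that point $X_s$ has no crepant valuation; being canonical, $X_s$ is then terminal, which gives (i) and completes the construction.
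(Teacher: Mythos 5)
Your proof is correct and is essentially the paper's intended argument: the corollary is stated there without proof as an immediate consequence of the preceding proposition, obtained by iterating it, with the first step chosen so that the extracted valuation has centre $Z$. Your explicit termination argument---crepant blow-ups preserve discrepancies, so the finite set of crepant valuations (finite by the remark preceding the proposition) strictly shrinks at each stage, and a canonical variety with no crepant valuation is terminal---is exactly the bookkeeping the paper leaves implicit.
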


We construct a divisor on $X_s$ which possesses the information on the index of $X$.

\begin{theorem}\label{thm:divisor}
Let $P\in X$ be a canonical singularity such that $P$ is a crepant centre. Let $r_P$ denote the index of $X$ at $P$ and $\fm_P$ the maximal ideal sheaf for $P$. Then there exist a crepant blow-up $f\colon Y\to X$ and an effective divisor $F$ on $Y$ supported in $f^{-1}(P)$ such that
\begin{enumerate}
\item
$Y$ is $\bQ$-factorial and terminal,
\item
for $i\in\bZ$,
\begin{align*}
f_*\cO_Y(iK_Y-F)&=
\begin{cases}
\fm_P\cO_X(iK_X)&\textrm{if $r_P\mid i$,}\\
\cO_X(iK_X)     &\textrm{otherwise,}
\end{cases}\\
R^jf_*\cO_Y(iK_Y-F)&=0\quad\textrm{for $j\ge1$}.
\end{align*}
\end{enumerate}
\end{theorem}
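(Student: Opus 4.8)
The plan is to extract $Y$ from Corollary~\ref{cor:crepant} and then to manufacture $F$ as a single crepant divisor over $P$ that detects the whole index. First I would apply Corollary~\ref{cor:crepant} to the crepant centre $Z=\{P\}$, obtaining a tower $X=X_0\leftarrow X_1\leftarrow\cdots\leftarrow X_s=Y$ with composite $f\colon Y\to X$. Then $Y$ is $\bQ$-factorial and terminal, which gives~(i), and $f$ is crepant, so $iK_Y=f^*(iK_X)$ for every $i\in\bZ$. Since $\cO_X(iK_X)$ is reflexive and $f$ is an isomorphism in codimension one, the pushforward $f_*\cO_Y(iK_Y)$ agrees with $\cO_X(iK_X)$; this is the baseline from which both cases of~(ii) will be read off, the point being to subtract a divisor $F$ that cuts the sheaf down to $\fm_P\cO_X(iK_X)$ precisely when $iK_X$ is Cartier.

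For $F$ I would take a single crepant exceptional prime $E$ over $P$. As $c_X(E)=\{P\}$, a local function on $X$ lies in $f_*\cO_Y(-E)$ exactly when it vanishes along $E$, i.e.\ exactly when it lies in $\fm_P$; hence $f_*\cO_Y(-E)=\fm_P$. When $r_P\mid i$ the divisor $iK_Y=f^*(iK_X)$ is Cartier, so the projection formula yields $f_*\cO_Y(iK_Y-E)=\cO_X(iK_X)\otimes f_*\cO_Y(-E)=\fm_P\cO_X(iK_X)$, which is the first case. When $r_P\nmid i$ I must instead show that subtracting $E$ loses no section: expressing a local section $\phi$ of $\cO_X(iK_X)$ through its order along $E$, the requirement is $v_E(\phi)+\operatorname{coeff}_E(f^*iK_X)\ge1$ for all $\phi$, rather than the baseline $\ge0$. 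Setting $\delta_E(i):=\min_\phi\bigl(v_E(\phi)+\operatorname{coeff}_E(f^*iK_X)\bigr)$, subadditivity of $\delta_E$ under multiplication of sections shows that $\delta_E$ descends to a subadditive function on $\bZ/r_P$ whose zero set $H_E$ is a subgroup; since $\delta_E(i)=0$ whenever $r_P\mid i$, the whole of~(ii) reduces to the single statement $H_E=0$, i.e.\ to finding a crepant $E$ whose valuation sees every non-trivial class of $K_X$ in the local class group.

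The existence of such an index-detecting $E$ is the step I expect to be the main obstacle. I would argue on the index-one cover $\pi\colon\tilde X\to X$, a cyclic cover with group $G=\bZ/r_P$, étale in codimension one, with $\tilde X$ Gorenstein canonical and $\pi^{-1}(P)$ a single $G$-fixed point. A section of $\cO_X(iK_X)$ of baseline order along $E$ pulls back to a $\chi^i$-eigenfunction that is a unit along a $G$-invariant crepant divisor $\tilde E$ over $\pi^{-1}(P)$; it therefore suffices to produce a $G$-equivariant crepant $\tilde E$ on which $G$ acts faithfully enough that only the trivial character can be a unit along $\tilde E$, and then to descend it to $E$. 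Constructing this $\tilde E$ and controlling its descent is the heart of the matter.

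Finally, for the vanishing $R^jf_*\cO_Y(iK_Y-F)=0$ ($j\ge1$, all $i$) I would work along the tower: by the remark following the Proposition the steps $f_t$ with $t\ge2$ have $\rho=1$ and $-E_t$ is $f_t$-ample, so relative Kawamata--Viehweg vanishing applies after writing $iK_Y-F$ as $K_Y$ plus an $f_t$-nef and $f_t$-big combination of the pullback and of $-E_t$, and a Leray argument composes the steps; here one may have to spread $E$ into an $f$-nef combination supported on $f^{-1}(P)$ with the same pushforward. The directions in which $iK_Y$ is $f$-trivial (hence not $f$-big) and the negative $i$ need separate treatment, via Grauert--Riemenschneider for $\cO_Y(K_Y)$ together with duality. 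Arranging this vanishing uniformly in $i$, compatibly with the single choice of $E$, is the remaining technical point.
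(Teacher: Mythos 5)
Your skeleton agrees with the paper's in outline ($Y$ from Corollary~\ref{cor:crepant} with $Z=P$; projection formula when $r_P\mid i$; the content concentrated in the case $r_P\nmid i$ and in the $R^j$-vanishing), but the two steps you yourself defer---``the heart of the matter'' and ``the remaining technical point''---are exactly where the proofs live, and both are left genuinely open. The first gap is the worse one, and it has an elementary solution you missed: no index-one cover, equivariant crepant divisor $\tilde E$, or descent is needed, and your sketch of that route is doubtful anyway (there is no reason a $\chi^i$-eigenfunction should be a \emph{unit} along a prescribed $\tilde E$, nor is the existence of a suitably faithful $G$-equivariant crepant divisor over $\pi^{-1}(P)$ clear). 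The paper's argument is two lines: if $r_P\nmid i$ and $u\in\cO_X(iK_X)$, then $W:=(u)_X+iK_X$ is an effective $\bQ$-Cartier divisor that is not Cartier at $P$, hence $P\in\Supp W$ (were $P\notin\Supp W$, $W$ would be $0$, so Cartier, near $P$); pulling back, $(u)_Y+iK_Y=f^*W$ has \emph{positive} coefficient along every divisor with centre $P$, in particular along your $E$, and since $(u)_Y+iK_Y$ is an integral divisor that coefficient is $\ge1$. So $H_E=0$ holds automatically for \emph{any} crepant prime with centre $P$ (the paper uses $E_1$ on $X_1$), and your subgroup formalism is unnecessary. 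Note also a small error in your baseline: $f$ is \emph{not} an isomorphism in codimension one (it extracts $E_1,\dots,E_s$); the identity $f_*\cO_Y(iK_Y)=\cO_X(iK_X)$ follows instead from crepancy, $iK_Y=if^*K_X$, by pushing the effectivity condition back and forth.

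The second gap is the choice $F=E$ itself: on $Y$ the strict transform $E$ of $E_1$ has no reason to be anti-nef over $X$ (only $-E_1$ is $f_1$-nef on $X_1$), so Kawamata--Viehweg does not apply to $\cO_Y(iK_Y-E)$, and your fallback---``spread $E$ into an $f$-nef combination with the same pushforward''---is precisely the nontrivial construction, not a remark: once $F$ is changed, the pushforward identity in (ii) must be re-proved for the new $F$. The paper resolves this by working down the tower: $F_1:=E_1$ and inductively $F_t:=\lceil f_t^*F_{t-1}\rceil=f_t^*F_{t-1}+c_tE_t$ with $c_t\in[0,1)$, so $-F_t$ is $f_t$-nef at every step ($f_t^*F_{t-1}$ is $f_t$-numerically trivial and $-E_t$ is $f_t$-nef), giving (\ref{eqn:FtRj}) by Kawamata--Viehweg; and when $c_t>0$ the fractionality of $f_t^*F_{t-1}$ shows $iK_{X_{t-1}}-F_{t-1}$ is non-Cartier along $f_t(E_t)$, so the same non-Cartier-plus-integrality trick shows no sections are lost in (\ref{eqn:FtR0}); Leray then composes the steps. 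Finally, your proposed special treatment of ``non-big directions'' and negative $i$ via Grauert--Riemenschneider and duality is a red herring: since each $f_t$ is birational, $f_t$-nef already implies $f_t$-big, and since $K_{X_t}$ is $f_t$-numerically trivial, $iK_{X_t}-F_t-K_{X_t}\equiv_{f_t}-F_t$ uniformly in $i\in\bZ$, so one vanishing argument covers all $i$ at once.
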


\begin{proof}
We take a sequence of crepant blow-ups $f_t$ in Corollary \ref{cor:crepant} with $Z=P$, and set $Y:=X_s$. We will construct inductively divisors $F_t\ge0$ on $X_t$ such that
\begin{align}
\label{eqn:F1R0}
f_{1*}\cO_{X_1}(iK_{X_1}-{F_1})&=
\begin{cases}
\fm_P\cO_X(iK_X)&\textrm{if $r_P\mid i$,}\\
\cO_X(iK_X)     &\textrm{otherwise,}
\end{cases}\\
\label{eqn:F1Rj}
R^jf_{1*}\cO_{X_1}(iK_{X_1}-{F_1})&=0\quad\textrm{for $j\ge1$},
\end{align}
and for $t>1$, 
\begin{align}
\label{eqn:FtR0}
f_{t*}\cO_{X_t}(iK_{X_t}-{F_t})&=\cO_{X_{t-1}}(iK_{X_{t-1}}-{F_{t-1}}),\\
\label{eqn:FtRj}
R^jf_{t*}\cO_{X_t}(iK_{X_t}-{F_t})&=0\quad\textrm{for $j\ge1$}.
\end{align}
Then Leray's spectral sequence induces that $F:=F_s$ is a required divisor.

We set $F_1:=E_1$. The vanishing (\ref{eqn:F1Rj}) follows from Kawamata--Viehweg vanishing theorem \cite[Theorem 1.2.5, Remark 1.2.6]{KMM87}. If $r_P\mid i$, then (\ref{eqn:F1R0}) is by the projection formula. To see (\ref{eqn:F1R0}) for $r_P\nmid i$, we regard $K_X$ as a fixed divisor (not a divisor class), and so $K_{X_1}=f_1^*K_X$. Denote by $\cK_X$ the constant sheaf of the function field of $X$. Then the inclusion $f_{1*}\cO_{X_1}(iK_{X_1}-{F_1})\subset\cO_X(iK_X)$ is interpreted by the expressions
\begin{align*}
f_{1*}\cO_{X_1}(iK_{X_1}-{F_1})&=\{u\in\cK_X\mid(u)_{X_1}+if_1^*K_X-F_1\ge0\},\\
\cO_X(iK_X)&=\{u\in\cK_X\mid(u)_X+iK_X\ge0\}.
\end{align*}
Suppose $u\in\cK_X$ satisfies $(u)_X+iK_X\ge0$. If $r_P\nmid i$, then $(u)_X+iK_X$ is not Cartier at $P$, so there exists a divisor $D>0$ passing through $P$ such that $(u)_X+iK_X-D$ is an effective Cartier divisor. Then $(u)_{X_1}+if_1^*K_X-f_1^*D\ge0$. By $f_1^*K_X=K_{X_1}$ and $F_1\subset\Supp f_1^*D$, we obtain $(u)_{X_1}+if_1^*K_X-F_1\ge0$, implying (\ref{eqn:F1R0}).

For $t>1$, we set $F_t:=\lceil f_t^*F_{t-1}\rceil$ inductively. $F_t=f_t^*F_{t-1}+c_tE_t$ with some $c_t\in[0,1)$, so $-F_t$ is $f_t$-nef. The (\ref{eqn:FtRj}) is again by Kawamata--Viehweg vanishing theorem. If $c_t=0$, then (\ref{eqn:FtR0}) is obvious. If $c_t>0$, then the equality $iK_{X_t}-F_t=f_t^*(iK_{X_{t-1}}-F_{t-1})-c_tE_t$ shows that $iK_{X_{t-1}}-F_{t-1}$ is not Cartier at every point in $f_t(E_t)$. Now we get (\ref{eqn:FtR0}) just as in the proof of (\ref{eqn:F1R0}) for $r_P\nmid i$.
\end{proof}

\section{The singular Riemann--Roch formula}\label{sec:singRR}
We shall apply the singular Riemann--Roch formula due to Reid to our crepant blow-up, and use the method \cite{K01}, \cite{K05} in the classification of $3$-fold divisorial contractions. We briefly recall the formula on a canonical $3$-fold.

\begin{theorem}[{\cite[Theorem 10.2]{R87}}]
Let $X$ be a projective $3$-fold with canonical singularities and $D$ a divisor on $X$ such that $D\sim i_PK_X$ with $i_P\in\bZ$ at each $P\in X$.
\begin{enumerate}
\item
There is a formula of the form
\begin{align*}
\chi(\cO_X(D))=\chi(\cO_X)+\frac{1}{12}D(D-K_X)(2D-K_X)+\frac{1}{12}D\cdot c_2(X)+\sum_Pc_P(D),
\end{align*}
where the summation takes place over the singularities of $\cO_X(D)$, and $c_P(D)\in \bQ$ is a contribution due to the singularity at $P$, depending only on the analytic type.
\item
For a terminal cyclic quotient singularity $P$ of type $\frac{1}{r_P}(1,-1,b_P)$,
\begin{align*}
c_P(D)=-\overline{i_P}\frac{r_P^2-1}{12r_P}+\sum_{j=1}^{\overline{i_P}-1}\frac{\overline{jb_P}(r_P-\overline{jb_P})}{2r_P},
\end{align*}
where $\overline{i}=i-\lfloor\frac{i}{r_P}\rfloor r_P$ denotes the residue of $i$ modulo $r_P$.
\item
For an arbitrary terminal singularity $P$,
\begin{align*}
c_P(D)=\sum_Qc_Q(D_Q),
\end{align*}
where $\{(Q,D_Q)\}_Q$ is a flat deformation of $(P, D)$ to the basket of terminal cyclic quotient singularities $Q$. Such $Q$ is called a fictitious singularity.
\end{enumerate}
\end{theorem}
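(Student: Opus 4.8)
The plan is to prove the three parts in sequence, grounding everything in equivariant Riemann--Roch on cyclic quotients. For the structural formula (i) I would invoke the singular Riemann--Roch theorem of Baum--Fulton--MacPherson, which provides a Todd homology class $\tau(X)\in A_*(X)_\bQ$ with $\chi(\cO_X(D))=\deg(\operatorname{ch}(\cO_X(D))\cap\tau(X))$ for every $D$. Passing to a crepant blow-up $f\colon Y\to X$ onto a $\bQ$-factorial terminal model --- whose existence and cohomological triviality are exactly what Corollary \ref{cor:crepant} and the argument of Theorem \ref{thm:divisor} supply, via $K_Y=f^*K_X$ and Kawamata--Viehweg vanishing --- reduces the computation to $Y$, where the singularities are isolated. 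There $\tau(Y)$ differs from the naive Todd class of a resolution only in components supported at the finitely many singular points, so expanding the degree-$3$ part of $\operatorname{ch}(\cO_Y(D))\cap\tau(Y)$ separates into the resolution-independent main term $\chi(\cO_X)+\frac{1}{12}D(D-K_X)(2D-K_X)+\frac{1}{12}D\cdot c_2(X)$ (where $c_2(X)$ is read off from the terminal/resolution model) and a sum of point-supported corrections $\sum_Pc_P(D)$; the dependence only on the analytic type is automatic because $\tau$ is \'etale-local.

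For (ii) I would present the terminal cyclic quotient analytically as $\bA^3/\mu_{r_P}$ with $\mu_{r_P}$ acting by weights $(1,-1,b_P)$ and acting on the fibre of $\cO(D)\cong\cO(i_PK_X)$ through $\zeta^{i_P}$, where $\zeta=e^{2\pi i/r_P}$. The Atiyah--Bott holomorphic Lefschetz formula then identifies the local correction with the finite sum
\[
c_P(D)=\frac{1}{r_P}\sum_{k=1}^{r_P-1}\frac{\zeta^{ki_P}}{(1-\zeta^{k})(1-\zeta^{-k})(1-\zeta^{kb_P})}.
\]
Converting this trigonometric sum into Reid's closed form is the computational core: I would telescope in $i_P$, noting that $c_P(i_P)-c_P(i_P-1)$ cancels the factor $(1-\zeta^{k})$ and collapses to a two-term Dedekind--Rademacher sum whose standard sawtooth evaluation yields $\overline{jb_P}(r_P-\overline{jb_P})/(2r_P)$; summing the successive differences then reconstructs the stated expression, with the summation constant producing the term $-\overline{i_P}(r_P^2-1)/(12r_P)$.

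For (iii) I would use that every terminal $3$-fold singularity can be deformed, in a flat family, to a collection of terminal cyclic quotient singularities --- its basket --- and that $(P,D)$ deforms compatibly. Over such a family the total Euler characteristic is locally constant, while the main term of (i) is a combination of Chern numbers that is itself deformation-invariant; subtracting the two shows $\sum_Pc_P(D)$ is constant along the family, and specialising from the general fibre to the basket gives $c_P(D)=\sum_Qc_Q(D_Q)$. The step I expect to be hardest is (ii): the structural formula and the deformation invariance are conceptually clean, but extracting Reid's explicit Dedekind-sum expression from the Lefschetz sum demands the delicate finite-Fourier manipulation above, and one must police the residue convention $\overline{\,\cdot\,}$ and the sign of the weight $-1$ throughout in order to land on the precise coefficients.
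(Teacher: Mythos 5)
This statement is not proved in the paper at all: it is quoted as Reid's theorem \cite[Theorem 10.2]{R87}, so there is no internal proof to compare against. Measured against Reid's actual argument, your architecture is the standard one --- (i) a Baum--Fulton--MacPherson-type existence statement with point-supported corrections, (ii) explicit evaluation at cyclic quotients, (iii) invariance under the flat deformation to the basket --- and your telescoping in $i_P$, reducing the difference $c_P(i)-c_P(i-1)$ to a two-dimensional (surface, $A$-type) Dedekind-sum contribution, is exactly the periodicity device Reid uses; he, however, evaluates the surface contributions by elementary sheaf-theoretic/cyclic-cover computations rather than by Atiyah--Bott. One cosmetic point in (i): for a non-Cartier Weil divisor $D$ the class $\operatorname{ch}(\cO_X(D))$ is not defined; you should work with the BFM class $\tau_X(\cO_X(D))$ directly and argue that it differs from the expected product with the Todd class only in classes supported at the singular points.

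There are two genuine gaps in (ii). First, your displayed Lefschetz identification is false as written: Reid's normalization forces $c_P(D)=0$ whenever $r_P\mid i_P$ (both terms of his expression vanish), whereas your character sum does not vanish at $\overline{i_P}=0$. Concretely, for type $\frac{1}{2}(1,-1,1)$ and $i_P=1$ Reid gives $c_P=-\frac{3}{24}=-\frac{1}{8}$, while your sum gives $\frac{1}{2}\cdot\frac{-1}{2\cdot2\cdot2}=-\frac{1}{16}$. The correct identification subtracts the value at $i_P\equiv0$, namely
\begin{align*}
c_P(D)=\frac{1}{r_P}\sum_{k=1}^{r_P-1}\frac{\zeta^{ki_P}-1}{(1-\zeta^{k})(1-\zeta^{-k})(1-\zeta^{kb_P})},
\end{align*}
the dropped constant being absorbed into $\chi(\cO_X)$ and the $c_2$ main term of the quotient; this is inconsistent with your plan to anchor the telescope at the stated formula. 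Second, Atiyah--Bott computes an equivariant Euler characteristic on a smooth variety with group action; to conclude that this local quantity equals the $c_P$ axiomatized in (i) you must globalize --- realize the given cyclic quotient inside a projective global quotient, compute $\chi$ upstairs, and match the main terms downstairs --- and this bridging step, though standard, is not automatic from ``\'etale-locality.'' Finally, in (iii) the existence of the flat deformation of an arbitrary terminal $3$-fold point to its basket is itself a substantive theorem (Reid, resting on Mori's classification of terminal singularities), and one must also glue the local deformations into a global projective family along which $D$ and the intersection numbers $D(D-K_X)(2D-K_X)$, $D\cdot c_2$ deform; these inputs should be cited, not asserted.
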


\begin{remark}
The condition $D\sim i_PK_X$ always holds if $X$ is $\bQ$-factorial and terminal \cite[Corollary 5.2]{Km88}.
\end{remark}

Our object is a germ of a crepant blow-up $f\colon Y\to X$ with a divisor $F$ on $Y$ in Theorem \ref{thm:divisor} at a $3$-fold canonical singularity $P\in X$ with index $r_P$. Shrinking and compactifying it, we may assume that $Y$ is projective and terminal ($f$ is merely a projective morphism outside a neighbourhood of $P$). We shall express the function $\delta_P(i)$ below.

\begin{definition}
We define the function $\delta_P(i)$ on $\bZ$ as
\begin{align*}
\delta_P(i):=
\begin{cases}
1&\textrm{if $r_P\mid i$,}\\
0&\textrm{otherwise.}
\end{cases}
\end{align*}
\end{definition}

Applying Theorem \ref{thm:divisor} and the vanishing $R^jf_*\cO_Y(iK_Y)=0$ for $j\ge1$ to the exact sequence
\begin{align*}
0\to\cO_Y(iK_Y-F)\to\cO_Y(iK_Y)\to\cO_F(iK_Y|_F)\to0,
\end{align*}
we obtain
\begin{align}\label{eqn:delta}
\delta_P(i)&=\dim_kf_*\cO_Y(iK_Y)/f_*\cO_Y(iK_Y-F)\\
\nonumber&=h^0(\cO_F(iK_Y|_F))\\
\nonumber&=\chi(\cO_F(iK_Y|_F))\\
\nonumber&=\chi(\cO_Y(iK_Y))-\chi(\cO_Y(iK_Y-F)).
\end{align}

Let $I_0:=\{Q\ \textrm{with type $\frac{1}{r_Q}(1,-1,b_Q)$}\}$ be the basket of fictitious singularities from singularities on $Y$. Note that $b_Q$ is co-prime to $r_Q$. For $Q\in I_0$, let $f_Q$ denote the smallest non-negative integer such that $F\sim f_QK_Y$ at $Q$. By replacing $b_Q$ with $r_Q-b_Q$ if necessary, we may assume $v_Q:=\overline{f_Qb_Q}\le r_Q/2$. Set $I:=\{Q\in I_0\mid f_Q\neq0\}$.

With this notation, the singular Riemann--Roch formula computes the right-hand side of (\ref{eqn:delta}), to provide
\begin{align}\label{eqn:A}
\delta_P(i)=\frac{1}{6}F^3+\frac{1}{12}F\cdot c_2(Y)+\sum_{Q\in I}(A_Q(i)-A_Q(i-f_Q)),
\end{align}
where the contribution $A_Q(i)$ is given by
\begin{align*}
A_Q(i):=-\overline{i}\frac{r_Q^2-1}{12r_Q}+\sum_{j=1}^{\overline{i}-1}\frac{\overline{jb_Q}(r_Q-\overline{jb_Q})}{2r_Q}.
\end{align*}
The $A_Q(i)$ satisfies the formula
\begin{align*}
A_Q(i+1)-A_Q(i)&=-\frac{r_Q^2-1}{12r_Q}+B_Q(ib_Q)
\end{align*}
with
\begin{align*}
B_Q(i):=\frac{\overline{i}(r_Q-\overline{i})}{2r_Q}.
\end{align*}
Therefore by (\ref{eqn:A}), we have
\begin{align}\label{eqn:B}
\delta_P(i+1)-\delta_P(i)=\sum_{Q\in I}(B_Q(ib_Q)-B_Q(ib_Q-v_Q)).
\end{align}

\begin{lemma}\label{lem:index}
The $r_P$ equals the l.c.m.\ of $r_Q$ for all $Q\in I$.
\end{lemma}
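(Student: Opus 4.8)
The plan is to extract $r_P$ from the periodic identity (\ref{eqn:B}). The easy inclusion is $r_P\mid N$ for $N:=\operatorname{lcm}_{Q\in I}r_Q$: by (\ref{eqn:A}) one can write $\delta_P(i)=c+\sum_{Q\in I}\bigl(A_Q(i)-A_Q(i-f_Q)\bigr)$ with a constant $c$, and each $A_Q$ depends only on the residue of $i$ modulo $r_Q$, hence has period $r_Q\mid N$; so $\delta_P$ has period $N$ and $r_P\mid N$. The substance of the lemma is the reverse inclusion $N\mid r_P$, and the only thing that can go wrong is cancellation: writing $\Delta(i):=\delta_P(i+1)-\delta_P(i)=\sum_{Q\in I}g_Q(i)$ with $g_Q(i):=B_Q(ib_Q)-B_Q(ib_Q-v_Q)$, each $g_Q$ has period $r_Q$, but a priori their sum could acquire a strictly smaller period. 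I would rule this out on the level of the discrete Fourier transform on $\bZ/N$.

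The decisive input is the sign of the Fourier coefficients of $B_Q$. Its second difference $B_Q(j+1)-2B_Q(j)+B_Q(j-1)$ equals $-1/r_Q$ for $r_Q\nmid j$ and $(r_Q-1)/r_Q$ for $r_Q\mid j$; transforming this discrete Poisson equation gives $\hat B_Q(k)=-\bigl(4r_Q\sin^2(\pi k/r_Q)\bigr)^{-1}$, which is real and \emph{negative} for every $k\not\equiv0\pmod{r_Q}$. Since $g_Q(i)=\phi(i)-\phi(i-w_Q)$ for $\phi(i):=B_Q(b_Qi)$ and $w_Q:=b_Q^{-1}v_Q$, the shift rule yields $\hat g_Q(\kappa)=\hat B_Q(\kappa b_Q^{-1})\bigl(1-e^{-2\pi i\kappa w_Q/r_Q}\bigr)$. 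Because $1-\cos\ge0$, the real part of every summand satisfies $\operatorname{Re}\hat g_Q(\kappa)\le0$, with strict inequality exactly when $\hat g_Q(\kappa)\neq0$; viewed on $\bZ/N$ the same bound persists. This uniform sign is what prevents cancellation: $\operatorname{Re}\hat\Delta(K)=\sum_{Q\in I}\operatorname{Re}\hat g_Q(K)$ can vanish only if every $\hat g_Q(K)$ does.

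To conclude I would argue one prime at a time. Fix $p\mid N$, put $a:=\operatorname{ord}_p(N)$, and choose $Q^*\in I$ with $\operatorname{ord}_p(r_{Q^*})=a$. At the frequency $K:=N/r_{Q^*}$ we have $\operatorname{ord}_p(K)=0$, while the $Q^*$-term equals $\hat B_{Q^*}(b_{Q^*}^{-1})\bigl(1-e^{-2\pi iw_{Q^*}/r_{Q^*}}\bigr)$, which is non-zero because both $b_{Q^*}^{-1}$ and $w_{Q^*}$ are non-zero in $\bZ/r_{Q^*}$—the latter since $f_{Q^*}\neq0$ forces $v_{Q^*}\neq0$. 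Hence $\operatorname{Re}\hat\Delta(K)\le\operatorname{Re}\hat g_{Q^*}(K)<0$, so $\hat\Delta(K)\neq0$. On the other hand $\Delta$ has period $r_P$, so $\hat\Delta$ is supported on multiples of $N/r_P$; if $\operatorname{ord}_p(r_P)<a$ then $p\mid N/r_P\mid K$, contradicting $\operatorname{ord}_p(K)=0$. Therefore $\operatorname{ord}_p(r_P)=a$ for all $p\mid N$, and together with $r_P\mid N$ this gives $r_P=N$.

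The main obstacle is exactly this cancellation phenomenon: there is no formal reason for the smallest period of $\sum_{Q\in I}g_Q$ to equal $\operatorname{lcm}_{Q\in I}r_Q$, and I expect the crux of the argument to be the negativity of $\hat B_Q(k)$. Once that sign is in hand the non-positivity of $\operatorname{Re}\hat g_Q$ makes cancellation impossible, and the remainder is $p$-adic bookkeeping; so I would spend most of the effort verifying the Poisson computation and the resulting sign.
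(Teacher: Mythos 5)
Your proposal is correct---I checked the discrete Poisson computation ($\nabla^2B_Q=-1/r_Q+\mathbf{1}_{r_Q\mid j}$ does give $\hat B_Q(k)=-(4r_Q\sin^2(\pi k/r_Q))^{-1}<0$ for $r_Q\nmid k$), the shift/dilation rules (using $\gcd(b_Q,r_Q)=1$), the non-vanishing of the $Q^*$-term (using $0<f_Q<r_Q$, hence $v_Q\neq0$), and the support constraint on $\hat\Delta$ from the $r_P$-periodicity established in your first step---but for the divisibility $\operatorname{lcm}_{Q\in I}r_Q\mid r_P$ you take a genuinely different and much heavier route than the paper. The paper disposes of exactly that direction in one line of geometry: since $f$ is crepant, $r_PK_Y=r_Pf^*K_X$ is Cartier about $f^{-1}(P)$, and each basket index $r_Q$ divides the local Cartier index of the corresponding actual singularity of $Y$, so $r_Q\mid r_P$ for every $Q\in I$ with no cancellation analysis whatsoever; the direction $r_P\mid\operatorname{lcm}r_Q$, which you correctly call easy, is the one the paper extracts from (\ref{eqn:B}) and periodicity, matching your first paragraph (your variant via (\ref{eqn:A}) is even slightly more direct than telescoping (\ref{eqn:B})). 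So you misjudged where the substance lies: the cancellation phenomenon you work to exclude is real as a purely numerical question, but the geometric setup makes it moot. What your Fourier argument buys in exchange is a strictly stronger arithmetic statement: the equality $r_P=\operatorname{lcm}r_Q$ follows from the numerical identity (\ref{eqn:B}) alone, given only $\gcd(b_Q,r_Q)=1$ and $v_Q\not\equiv0\pmod{r_Q}$, independently of the divisors' geometric origin---the uniform sign $\operatorname{Re}\hat g_Q\le0$ shows that no basket can simulate a smaller index, a non-cancellation principle of the same flavour as the positivity that drives Lemma \ref{lem:J}. What the paper's route buys is brevity and the systematic reuse of the crepancy $K_Y=f^*K_X$ that is already central to the whole construction.
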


\begin{proof}
Since $r_PK_Y=r_Pf^*K_X$ is a Cartier divisor about $f^{-1}(P)$, $r_Q$ divides $r_P$ for all $Q\in I$. On the other hand, we see that $r_P$ divides the l.c.m.\ of $r_Q$ by (\ref{eqn:B}) and the periodic properties of $\delta_P$, $B_Q$.
\end{proof}

\section{Boundedness of indices}\label{sec:boundedness}
We shall prove Theorem \ref{thm:main} in this section. Let $r_P$ denote the index of $X$ at $P$. We take a crepant blow-up $f\colon Y\to X$ with a divisor $F$ on $Y$ in Theorem \ref{thm:divisor}. We restrict the possibilities of $J:=\{(r_Q,v_Q)\}_{Q\in I}$ using (\ref{eqn:B}) for $i=0$.

\begin{lemma}\label{lem:J}
$J$ is one of the types in Table \ref{tbl:J}.
\begin{table}[ht]\caption{}\label{tbl:J}
\begin{tabular}{c|l|c}
\hline
\textup{type}&\multicolumn{1}{c|}{$J$} &$r_P$\\
\hline
\textup{1}   &$(2,1),(2,1),(2,1),(2,1)$&$2$  \\
\textup{2}   &$(2,1),(2,1),(4,2)$      &$4$  \\
\textup{3}   &$(2,1),(3,1),(6,1)$      &$6$  \\
\textup{4}   &$(2,1),(4,1),(4,1)$      &$4$  \\
\textup{5}   &$(3,1),(3,1),(3,1)$      &$3$  \\
\textup{6}   &$(4,2),(4,2)$            &$4$  \\
\textup{7}   &$(2,1),(6,3)$            &$6$  \\
\hline
\end{tabular}
\qquad
\begin{tabular}{c|l|c}
\hline
\textup{type}&\multicolumn{1}{c|}{$J$} &$r_P$\\
\hline
\textup{8}   &$(2,1),(8,2)$            &$8$  \\
\textup{9}   &$(3,1),(6,2)$            &$6$  \\
\textup{10}  &$(5,1),(5,2)$            &$5$  \\
\textup{11}  &$(8,4)$                  &$8$  \\
\textup{12}  &$(9,3)$                  &$9$  \\
\textup{13}  &$\emptyset$              &$1$  \\
\hline
\multicolumn{3}{c}{}                         \\
\end{tabular}
\end{table}
\end{lemma}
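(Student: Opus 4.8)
The plan is to reduce the lemma to a single numerical identity by specialising (\ref{eqn:B}) to $i=0$, and then to enumerate its solutions by a short case analysis.

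First I would evaluate (\ref{eqn:B}) at $i=0$. Since $B_Q(0)=0$ and $\overline{-v_Q}=r_Q-v_Q$, each summand on the right equals $-v_Q(r_Q-v_Q)/(2r_Q)$, while on the left $\delta_P(0)=1$ and $\delta_P(1)$ is $1$ or $0$ according as $r_P=1$ or $r_P\ge2$. Thus (\ref{eqn:B}) becomes
\begin{align*}
\delta_P(1)-1=-\sum_{Q\in I}\frac{v_Q(r_Q-v_Q)}{2r_Q}.
\end{align*}
As every summand is strictly positive, the case $r_P=1$ forces $I=\emptyset$ (type 13); and when $r_P\ge2$ one obtains the relation $\sum_{Q\in I}g_Q=2$ with $g_Q:=v_Q(r_Q-v_Q)/r_Q$.

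The next step is to record that, under $r_Q\ge2$ and $1\le v_Q\le r_Q/2$, the quantity $g_Q=v_Q-v_Q^2/r_Q$ is increasing in $r_Q$ for fixed $v_Q$, so $g_Q\ge v_Q/2$ with equality at $r_Q=2v_Q$. Hence $g_Q\ge1$ whenever $v_Q\ge2$ (equality only for $(4,2)$), whereas $v_Q=1$ gives $g_Q=1-1/r_Q\in[1/2,1)$. In particular $g_Q\ge\frac{1}{2}$, so $|I|\le4$, and I would split into the cases $|I|=1,2,3,4$. When $|I|=4$ all $g_Q=\frac{1}{2}$, giving four $(2,1)$'s (type 1). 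When $|I|=3$ each $g_Q\le1$, so each term is $(r,1)$ or $(4,2)$; a single $(4,2)$ forces the others to be $(2,1),(2,1)$ (type 2), and otherwise $\sum1/r_i=1$ yields $(2,3,6),(2,4,4),(3,3,3)$ (types 3--5). When $|I|=1$, solving $r_Q(v_Q-2)=v_Q^2$ together with $v_Q\le r_Q/2$ leaves only $(8,4),(9,3)$ (types 11, 12).

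The hard part is the case $|I|=2$, where one index is a priori unbounded. I would argue that two $v=1$ terms sum to less than $2$, so the larger term has $v\ge2$ and hence $g\ge1$; then the smaller term has $g\le1$. If the smaller also has $v\ge2$ both equal $1$, giving $(4,2),(4,2)$ (type 6); otherwise the smaller is $(r_1,1)$ and the larger has $g=1+1/r_1\le\frac{3}{2}$. This last bound is what makes the enumeration finite: $g\le\frac{3}{2}$ with $v\ge2$ forces $r\le2v^2/(2v-3)$ and thus $v\le3$, leaving the candidates $(5,2),(6,2),(7,2),(8,2),(6,3)$ with values $6/5,4/3,10/7,3/2,3/2$; requiring $1+1/r_1$ to hit one of these selects types 10, 9, 8, 7, the value $10/7$ having no admissible partner. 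Finally I would read off the index column from Lemma \ref{lem:index} as the l.c.m.\ of the $r_Q$ in each row.
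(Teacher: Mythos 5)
Your proposal is correct and takes essentially the same route as the paper: both specialise (\ref{eqn:B}) to $i=0$ to get the key identity $\sum_{Q\in I}B_Q(v_Q)=1$ (your $\sum_Q g_Q=2$ is exactly this doubled) and both rest on the same estimate $v_Q/4\le B_Q(v_Q)<v_Q/2$, i.e.\ $g_Q\ge v_Q/2$ with equality at $r_Q=2v_Q$. The only difference is bookkeeping --- you split by $|I|$ (handling the unbounded $|I|=2$ case via the bound $g\le 3/2$), whereas the paper first enumerates the multisets $J'=\{v_Q\}$ from $2<\sum v_Q\le4$ and then solves for the $r_Q$ --- and your enumeration is complete and reproduces Table \ref{tbl:J} exactly.
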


\begin{proof}
By Lemma \ref{lem:index}, $r_P$ is determined by $J$, and $r_P=1$ if and only if $J=\emptyset$. We assume $r_P>1$ from now on. Then (\ref{eqn:B}) for $i=0$ is written as
\begin{align}\label{eqn:B0}
\sum_{Q\in I}B_Q(v_Q)=1.
\end{align}
By the definition of $B_Q$ and $r_Q\ge2v_Q$, we have
\begin{align}\label{eqn:estimate}
v_Q/4\le B_Q(v_Q)<v_Q/2.
\end{align}
Then $J':=\{v_Q\}_{Q\in I}$, which satisfies (\ref{eqn:B0}) and (\ref{eqn:estimate}), should be one of
\begin{align*}
\{1,1,1,1\},\{1,1,2\},\{1,1,1\},\{2,2\},\{1,3\},\{1,2\},\{3\},\{4\}.
\end{align*}
For each of these candidates for $J'$, one can solve the equation (\ref{eqn:B0}) for $r_Q$ ($\ge2v_Q$) explicitly. Every solution is in Table \ref{tbl:J}. For example, suppose $J'=\{1,2\}$. We set $J=\{(r_1,1),(r_2,2)\}$. Then (\ref{eqn:B0}) becomes $1/r_1+4/r_2=1$. Thus $(r_1,r_2)=(2,8)$, $(3,6)$ or $(5,5)$, so $J$ is of type 8, 9, 10 respectively.
\end{proof}

By Lemma \ref{lem:J}, we have $r_P\le9$, and for Theorem \ref{thm:main} it is enough to exclude types 8, 11, 12. However, we derive a finer numerical classification by determining $\tilde{J}:=\{(r_Q,v_Q,b_Q)\}_{Q\in I}$.

\begin{theorem}
$\tilde{J}$ is one of the types in Table \ref{tbl:tildeJ}.
\begin{table}[ht]\caption{}\label{tbl:tildeJ}
\begin{tabular}{c|l|c}
\hline
\textup{type}&\multicolumn{1}{c|}{$\tilde{J}$}&$r_P$\\
\hline
\textup{1}   &$(2,1,1),(2,1,1),(2,1,1),(2,1,1)$&$2$ \\
\textup{3}   &$(2,1,1),(3,1,2),(6,1,5)$        &$6$ \\
\textup{4}   &$(2,1,1),(4,1,3),(4,1,3)$        &$4$ \\
\textup{5}   &$(3,1,2),(3,1,2),(3,1,2)$        &$3$ \\
\textup{10}  &$(5,1,4),(5,2,3)$                &$5$ \\
\textup{13}  &$\emptyset$                      &$1$ \\
\hline
\end{tabular}
\end{table}
\end{theorem}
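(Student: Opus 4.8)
The plan is to upgrade the single relation (\ref{eqn:B0}), which is only the $i=0$ instance of (\ref{eqn:B}) and was all that Lemma \ref{lem:J} used, to the full system obtained by imposing (\ref{eqn:B}) for every $i\in\bZ$. Since each summand has period $r_Q$ dividing $r_P$ by Lemma \ref{lem:index}, the identity (\ref{eqn:B}) is a relation between two functions of period $r_P$, so it suffices to verify it at $i=0,1,\dots,r_P-1$. The left-hand side $\delta_P(i+1)-\delta_P(i)$ is completely explicit: modulo $r_P$ it equals $-1$ when $i\equiv0$, equals $+1$ when $i\equiv-1$, and vanishes otherwise. The idea is that for each type surviving Lemma \ref{lem:J} this rigid profile forces the residues $b_Q$, and that for several types no admissible choice of the $b_Q$ exists at all.

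First I would put the summands of the right-hand side in closed form. Writing $a:=\overline{ib_Q}$ and using $B_Q(x)=\overline{x}(r_Q-\overline{x})/(2r_Q)$, a direct computation gives
\begin{align*}
B_Q(ib_Q)-B_Q(ib_Q-v_Q)=
\begin{cases}
\dfrac{v_Q}{2r_Q}(r_Q+v_Q-2a)&\textrm{if $a\ge v_Q$,}\\
\dfrac{r_Q-v_Q}{2r_Q}(2a-v_Q)&\textrm{if $a<v_Q$,}
\end{cases}
\end{align*}
a piecewise-linear function of $a$ whose $i=0$ value is $-B_Q(v_Q)$, recovering (\ref{eqn:B0}). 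As $i$ runs through a period, $a=\overline{ib_Q}$ runs through all residues modulo $r_Q$ in the order dictated by $b_Q$ (recall $\gcd(b_Q,r_Q)=1$), so the sequence of values of this summand is precisely the reshuffling of a fixed list determined by $b_Q$. Every coprime residue $b_Q$ is a priori allowed, since $b_Q$ is invertible and $v_Q=\overline{f_Qb_Q}$ can always be solved for $f_Q$; thus the data to be pinned down is exactly $\tilde{J}=\{(r_Q,v_Q,b_Q)\}$, and $f_Q$ no longer intervenes.

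Then I would run the case analysis type by type. The single-singularity types $11$ and $12$ fall immediately: with one summand the right-hand side of (\ref{eqn:B}) would have to take the integer values $-1,0,1$ for all $i$, yet adjacent arguments $a,a+1\ge v_Q$ give values differing by $v_Q/r_Q\notin\bZ$, so the formula cannot be integer-valued throughout. For the multi-singularity types one substitutes the closed form and solves the linear system $\sum_{Q\in I}\bigl(B_Q(ib_Q)-B_Q(ib_Q-v_Q)\bigr)=\delta_P(i+1)-\delta_P(i)$ over the finitely many coprime residues $b_Q$. For instance, in type $6$ each $(4,2)$-summand equals $(-\tfrac{1}{2},0,\tfrac{1}{2},0)$ regardless of $b_Q$, so the total is $(-1,0,1,0)$, which disagrees with the required $(-1,0,0,1)$; the same bookkeeping eliminates types $2$, $7$, $8$ and $9$, while for types $1$, $3$, $4$, $5$ and $10$ a unique assignment of the $b_Q$ (after the normalization $v_Q\le r_Q/2$) survives and reproduces Table \ref{tbl:tildeJ}.

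I expect the main obstacle to be the organization of the higher-index cases, where $r_P\in\{6,8\}$ and $I$ carries two or three singularities of distinct indices: there one must match a length-$r_P$ profile against sums of reshuffled piecewise-linear sequences ranging over all compatible choices of the $b_Q$, and it is the arithmetic of keeping these reshufflings straight — rather than any single conceptual difficulty — that is the real work. Exploiting the symmetry $b_Q\mapsto r_Q-b_Q$ already used to normalize $v_Q\le r_Q/2$, together with the fact that each summand of (\ref{eqn:B}) has vanishing sum over a full period, trims the search, but the bookkeeping remains the crux of the argument.
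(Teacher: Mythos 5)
Your proposal is correct and takes essentially the same route as the paper: the paper's proof likewise imposes (\ref{eqn:B}) for all $i$ (a finite check by periodicity) on the finitely many candidates surviving Lemma \ref{lem:J} and eliminates or pins down each type by explicit computation, exactly as in its demonstration for type 3. Your closed form for $B_Q(ib_Q)-B_Q(ib_Q-v_Q)$ and the non-integrality argument killing types 11 and 12 merely make explicit the bookkeeping the paper leaves to the reader, and your case computations match Table \ref{tbl:tildeJ}.
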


\begin{proof}
By Lemma \ref{lem:J}, there exist only finitely many candidates for $\tilde{J}$. For each candidate, one can compute the right-hand side of (\ref{eqn:B}) explicitly. It must coincide with $\delta_P(i+1)-\delta_P(i)$, but such a coincidence happens only if $\tilde{J}$ is one of the types in Table \ref{tbl:tildeJ}.

Here we demonstrate for type 3. $\tilde{J}=\{(2,1,1),(3,1,b_2),(6,1,b_3)\}$ with $b_2=1$ or $2$ and $b_3=1$ or $5$. The (\ref{eqn:B}) for $i=1$ is $\delta_P(2)-\delta_P(1)=1,1/3,2/3,0$ when $(b_2,b_3)=(1,1),(1,5),(2,1),(2,5)$ respectively. Thus $(b_2,b_3)$ must be $(2,5)$, and in this case (\ref{eqn:B}) surely holds for any $i$.
\end{proof}

\begin{remark}
In simple cases, Theorem \ref{thm:main} is known by the classification.
\begin{enumerate}
\item
(Morrison \cite{Mo85}, Ishida--Iwashita \cite{II87})\
If $P$ is a cyclic quotient singularity, then $r_P=1$ except $\frac{1}{4n}(1,2n+1,-2)$ ($n\ge2$), $\frac{1}{14}(1,9,11)$, $\frac{1}{9}(1,4,7)$, with $r_P=2,2,3$ respectively.
\item
(Hayakawa--Takeuchi \cite{HT87})\
If $P$ is an isolated singularity which is a cyclic quotient of a hypersurface singularity, then $r_P\le4$. The only case when $r_P=4$ is $o\in(x_1x_2+x_3^2+x_4^2=0)\subset\bA_{x_1x_2x_3x_4}^4/\bZ_{8}(1,5,3,7)$.
\end{enumerate}
\end{remark}

\section{Minimal discrepancies}\label{sec:md}
To begin with, we provide an example which explains the need of $P$ being a crepant centre in Theorem \ref{thm:main} even for a strictly canonical singularity. A similar example exists also for a $3$-fold strictly log canonical singularity \cite[Example 6.1]{F01}.

\begin{example}\label{exl:unbounded}
Let $r\in\bN$. Let $P\in X$ be the germ
\begin{align*}
o\in(x_1x_2+x_3^2=0)\subset\bA_{x_1x_2x_3x_4}^4/\bZ_r(1,-1,0,1),
\end{align*}
which is singular along the $x_4$-axis $C$. Let $f\colon Y\to X$ be the weighted blow-up with weights $\wt(x_1,x_2,x_3,x_4)=\frac{1}{r}(1,r-1,r,1)$. Then $K_Y=f^*K_X+\frac{1}{r}E$ with the exceptional divisor $E$, and $Y$ has $2$ terminal quotient singularities of types $\frac{1}{r-1}(1,-1,1)$ and $\frac{1}{r}(1,-1,1)$ outside the strict transform $C_Y$ of $C$. Let $g\colon Z\to Y$ be the blow-up with centre $C_Y$. The $g$ is a crepant blow-up and $Z$ is smooth about $g^{-1}(C_Y)$. Hence $X$ has canonical singularities with a crepant centre $C$, but $P$ is not a crepant centre. The index of $X$ at $P$ is $r$.
\end{example}

We focus on the minimal discrepancy to grasp this phenomenon. For a normal $\bQ$-Gorenstein singularity $P\in X$, the \textit{minimal discrepancy} $\md_PX$ of $X$ at $P$ is the infimum of discrepancies $a_E(X)$ for all divisors $E$ over $X$ with $c_X(E)=P$. Note that $\md_PX\in\{-\infty\}\cup[-1,\infty)$, and $P\in X$ is log canonical if and only if $\md_PX\ge-1$.

In Example \ref{exl:unbounded}, we have $\md_PX=1/r$. Shokurov formulated a question on the boundedness of indices in terms of minimal discrepancies.

\begin{question}[Shokurov]\label{qsn:Shokurov}
For each $(n,a)\in\bN\times[-1,\infty)$, does there exist a number $r(n,a)$ such that the index of an arbitrary $n$-fold log canonical singularity $P\in X$ with $\md_PX=a$ is at most $r(n,a)$?
\end{question}

He raised its weaker variant for canonical singularities.

\begin{question'}
For each $(n,a)\in\bN\times[0,\infty)$, does there exist a number $r'(n,a)$ such that the index of an arbitrary $n$-fold canonical singularity $P\in X$ with $\md_PX=a$ is at most $r'(n,a)$?
\end{question'}

The result of Ishii and Fujino gives $r(3,-1)=66$ for Question \ref{qsn:Shokurov}. Theorem \ref{thm:main} gives $r'(3,0)=6$ for Question \ref{qsn:Shokurov}$'$. Further, we provide an affirmative answer to Question \ref{qsn:Shokurov}$'$ for $n=3$.

\begin{theorem}\label{thm:md}
Question \ref{qsn:Shokurov}$'$ is true for $n=3$. More precisely, the minimal discrepancy of a $3$-fold canonical singularity is $0$, $1/r$ \textup{(}$r\in\bN$\textup{)} or $2$, and one can take
\begin{align*}
r'(3,0)=6,\qquad r'(3,1/r)=r!,\qquad r'(3,2)=1.
\end{align*}
\end{theorem}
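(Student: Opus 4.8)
The plan is to divide into cases according to the value $a:=\md_P X$, and to reduce everything except the trivial cases to a $\bQ$-factorial terminalization of $X$. Two of the three cases are immediate. If $a=2$, then $P$ is a smooth point of the $3$-fold $X$ (the bound $\md_P X\le 2$ with equality only at smooth points is standard), so $r_P=1=r'(3,2)$. If $a=0$, then the minimal discrepancy, being attained for a canonical singularity, is realised by a crepant divisor centred at $P$, so $P$ is a crepant centre and Theorem \ref{thm:main} gives $r_P\le 6=r'(3,0)$. The entire content thus lies in the range $0<a<2$, where I claim $a=1/r$ and $r_P\mid r!$.

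To treat this range I would take a crepant blow-up $f\colon Y\to X$ with $Y$ $\bQ$-factorial and terminal, extracting all the crepant valuations of $X$ (finitely many, as noted in Section \ref{sec:crepant}) by iterating the construction there. Since a crepant blow-up preserves discrepancies, $a_E(Y)=a_E(X)$ and $c_X(E)=f(c_Y(E))$, so $\md_P X=\inf\{a_E(Y)\mid c_Y(E)\subseteq f^{-1}(P)\}$. If a crepant divisor on $Y$ dominated $P$ it would have discrepancy $0$ over $X$ and make $P$ a crepant centre; hence $\dim f^{-1}(P)\le 1$, and the fibre is a union of points and curves. A $3$-fold terminal singularity $Q$ of index $r_Q$ satisfies $\md_Q Y=1/r_Q$, while the generic point of a fibre curve lies in the smooth locus of $Y$ (terminal $3$-folds have isolated singularities) and contributes discrepancy $1$. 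As every $1/r_Q\le 1$, the infimum is $\md_P X=1/r$ with $r:=\max_Q r_Q$ over the singular points $Q\in f^{-1}(P)$ (and $r=1$ if the fibre meets only the smooth locus). This establishes the trichotomy $\md_P X\in\{0\}\cup\{1/r\}\cup\{2\}$.

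For the index I would prove $r_P=\operatorname{lcm}_Q r_Q$. One direction is clear: $r_Q K_Y=f^*(r_Q K_X)$ is Cartier at $Q$ whenever $r_PK_X$ is Cartier at $P$, so each $r_Q\mid r_P$. Conversely set $m:=\operatorname{lcm}_Q r_Q$; then $mK_Y$ is Cartier in a neighbourhood of $f^{-1}(P)$, being Cartier away from the isolated $Q$ and Cartier at each $Q$ because $r_Q\mid m$. Since $K_Y=f^*K_X$, the line bundle $\cO_Y(mK_Y)$ is numerically $f$-trivial, so it has degree $0$ on every fibre curve; and because $X$ has rational singularities, $R^1f_*\cO_Y=0$ forces $H^1(\cO)$ of the (at most $1$-dimensional) fibre to vanish, so $\operatorname{Pic}^0$ of the fibre is trivial and $\cO_Y(mK_Y)$ restricts trivially to $f^{-1}(P)$. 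The standard descent argument (cohomology and base change together with $R^1f_*\cO_Y=0$) then yields $\cO_Y(mK_Y)=f^*L$ with $L$ invertible near $P$, so $mK_X$ is Cartier at $P$ and $r_P\mid m$. Finally each $r_Q\le r$ gives $r_P=\operatorname{lcm}_Q r_Q\mid\operatorname{lcm}(1,\dots,r)\mid r!$, hence $r_P\le r!=r'(3,1/r)$.

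The step I expect to be the main obstacle is pinning down $\md_P X$ in the intermediate range: one must be sure the infimum over the fibre is genuinely governed by the point contributions $1/r_Q$ and is not lowered by more exotic valuations. This rests on the exact value $\md_Q Y=1/r_Q$ for $3$-fold terminal singularities, which I would import from the classification of terminal singularities. The descent step is the other delicate point, but there the rationality of the singularities of $X$ does the work, trivialising $\operatorname{Pic}^0$ of the fibre and removing the only obstruction to descending $mK_Y$; the resulting formula $r_P=\operatorname{lcm}_Q r_Q$ is in fact sharper than the stated $r!$.
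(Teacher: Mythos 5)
Your proposal is correct and follows essentially the same route as the paper: pass to a terminalizing crepant blow-up $f\colon Y\to X$, combine $\md_QY=1/r_Q$ at terminal points with the discrepancy-$1$ divisors from blowing up fibre curves to get the trichotomy, handle $\md_PX=0$ via Theorem \ref{thm:main}, and descend the Cartier property of a suitable multiple of $K_Y$ along $f$. The only differences are organizational: you split by the value of $\md_PX$ rather than by $\dim f^{-1}(P)$ (equivalent, given that discrepancies lie in $\frac{1}{r_P}\bZ$ so the infimum is attained), and you reprove the descent step by the standard $R^1f_*\cO_Y=0$/$\operatorname{Pic}$ argument --- incidentally extracting the sharper $r_P=\operatorname{lcm}_Qr_Q$ --- where the paper simply cites \cite[Corollary 1.5]{Km88}.
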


\begin{proof}
Let $P\in X$ be a $3$-fold canonical singularity with index $r_P$. We shall verify the statement for any such $P$. We take a crepant blow-up $f\colon Y\to X$ with $Y$ terminal by Corollary \ref{cor:crepant}.

Suppose $\dim f^{-1}(P)=0$, that is, $P$ is terminal. Then it suffices to recall $\md_PX=1/r_P$ \cite{Km92}, \cite{Ma96} for terminal $P$ except for smooth $P$.

Suppose $\dim f^{-1}(P)=1$. For any curve $C\subset f^{-1}(P)$, the blow-up of $Y$ with centre $C$ generates a divisor $E$ with $a_E(X)=1$. Together with the mentioned result \cite{Km92}, \cite{Ma96}, we see that $\md_PX$ is the minimum of $1/r_Q$ for all $Q\in f^{-1}(P)$, where $r_Q$ denotes the index of $Y$ at $Q$. Hence $\md_PX=1/r$ with $r\in\bN$ and $r_Q\le r$ for all $Q\in f^{-1}(P)$. Thus $r!K_Y$ is a Cartier divisor about $f^{-1}(P)$, so $r_P\mid r!$ by \cite[Corollary 1.5]{Km88}.

Suppose $\dim f^{-1}(P)=2$. Then $P$ is a crepant centre, that is, $\md_PX=0$. The statement holds by Theorem \ref{thm:main}.
\end{proof}

{\small
\begin{acknowledgements}
I was asked the question on the boundedness of indices by Professor V. V. Shokurov at Johns Hopkins University in 2005. This question was raised again in the workshop at American Institute of Mathematics in 2012. I am grateful to him for the discussions. The research was partially supported by Grant-in-Aid for Young Scientists (A) 24684003.
\end{acknowledgements}
}

\end{document}